\newtheorem{definition}{Definition}[section]
\newtheorem{proposition}[definition]{Proposition}
\newtheorem{theorem}[definition]{Theorem}
\newtheorem{lemma}[definition]{Lemma}
\newtheorem{remark}{Remark}[section]
\newcommand{\re}{\mathbb R} 
\def \ep{\varepsilon}
\DeclareMathOperator{\diver}{div}
\DeclareMathOperator{\rot}{rot}
\DeclareMathOperator{\supp}{supp}
\DeclareMathOperator{\Int}{Int}
\DeclareMathOperator{\Ext}{Ext}
\title[Asymptotic behavior for elliptic equations in 2D exterior domain]{
Asymptotic behavior of solutions to elliptic equations in 2D exterior domains \\
} 
\author{Hideo Kozono, Yutaka Terasawa and Yuta Wakasugi}
\address[H. Kozono]{Department of Mathematics, Faculty of Science and Engineering,
Waseda University, Tokyo 169--8555, Japan, 
Mathematical Research Center for Co-creative Society, Tohoku University, 
Sendai 980-8578, Japan}
\email[H. Kozono]{kozono@waseda.jp, hideokozono@tohoku.ac.jp}
\address[Y. Terasawa]{Graduate School of Mathematics, Nagoya University,
Furocho Chikusaku Nagoya 464-8602, Japan}
\email[Y. Terasawa]{yutaka@math.nagoya-u.ac.jp}
\address[Y. Wakasugi]{Graduate School of Advanced Science and Engineering,
Hiroshima University,
Higashi-Hiroshima, 739-8527, Japan}
\email[Y. Wakasugi]{wakasugi@hiroshima-u.ac.jp}
\keywords{Second order elliptic equations; strong maximal principle; asymptotic behavior; level sets}
\begin{document}
\maketitle

%
\textit{Dedicated to Professor Toshiaki Hishida on the occasion of his 60th birthday}

\begin{abstract}
The asymptotic behavior of solutions to second order elliptic equations in
two-dimensional
exterior domains is studied.
In particular, under the assumption that the solution belongs to
the Lorentz space $L^{p,q}$ or the weak Lebesgue space $L^{p,\infty}$
with certain conditions on the coefficients, we give a natural and the optimal sharp pointwise estimate of the solution at spatial infinity.
The proof is based on the level set approach of solutions introduced by Korobkov--Pileckas--Russo \cite{KoPiRu19},
in which the decay property of the solution to the vorticity equation of the two-dimensional Navier--Stokes equations was studied.
\end{abstract}
\section{Introduction}
\footnote[0]{2010 Mathematics Subject Classification. 35J15; 35K10; 35B53}

Let $\Omega \subset \mathbb{R}^2$ be an exterior domain with smooth boundary $\partial \Omega$.
We study the following second order elliptic equation in $\Omega$:
\begin{equation}\label{eq:elliptic}
	Lu := - \sum_{i,j=1}^2 \partial_i ( a_{ij}(x) \partial_j u ) + \sum_{j=1}^2 b_j(x) \partial_j u + c(x) u = 0.
\end{equation}
Here, the coefficients $a_{ij}, b_j, c$ are smooth functions,  and
$(a_{ij})_{i,j=1,2}$ is assumed to be uniformly elliptic.
%
The problem \eqref{eq:elliptic} is motivated from
the vorticity equation of two-dimensional stationary incompressible fluids
\begin{equation}\label{eq:vor}
	-\Delta \omega + v \cdot \nabla \omega = 0 \quad \text{in} \ \Omega.
\end{equation}
Here, $v(x) = (v^1(x), v^2(x))$ is the velocity vector and
$\omega(x) = \rot v(x)$.
The asymptotic behavior of the solution $\omega$
under the condition of finite Dirichlet integral
$\nabla v \in L^2(\Omega)$
was studied by
Gilbarg--Weinberger \cite{GiWe78},
Amick \cite{Am88},
and Korobkov--Pileckas--Russo \cite{KoPiRu19, KoPiRu20}.
They proved that $\omega$ satisfies
\[
	\omega(x) = o(|x|^{-3/4})  \quad \text{as} \ |x| \to \infty.
\]
Recently, the authors \cite{KoTeWa22} obtained the asymptotic behavior
\[
	\omega(x) = o(|x|^{-(1/p+1/p^2)}) \quad \text{as} \ |x| \to \infty
\]
under the generalized finite Dirichlet condition $\nabla v \in L^p(\Omega)$ with some $p \in (2,\infty)$.
For the study of the asymptotic behavior of the solution to the vorticity equation \eqref{eq:vor},
it is crucial to treat the velocity $v(x)$ as a given coefficient and
to clarify how the decay property of $v(x)$ at spatial infinity influences that of $\omega(x)$.
Such an observation naturally indicates the following question:
\textit{for general second-order elliptic equations \eqref{eq:elliptic},
under what conditions on the coefficients can we obtain the pointwise decay properties at spatial infinity 
for the solution belonging to $L^p(\Omega)$ ?}
\par
To this question, our previous result \cite{KoTeWa21} showed that,
under the assumptions that the coefficients satisfy
\[ 
	|a_{ij}(x)| = O(|x|^{\alpha}),\quad |b_j(x)| = O(|x|^{\beta}) \ \text{as}\ |x|\to \infty,
	\quad c(x) \ge 0,
\]
with some $\alpha \in [0,2]$ and $\beta \le 1$ and either
$\diver (b_1, b_2) \le 2 c(x)$ or $|\diver (b_1, b_2) (x)| = O( |x|^{\beta-1} )$,
the asymptotic behavior of the smooth solution $u \in L^p (\Omega)$ with some $p \in [2,\infty)$
is given by
\[
	|u(x)| = o (|x|^{-\frac{1}{p}\left( 1+ \frac{\gamma}{2} \right)}) \ \text{as} \ |x|\to \infty
\]
with $\gamma = \min \{ 1 - \beta, 2 - \alpha \}$.
In particular, when $\alpha = 0$ and $\beta \le -1$, we have
\[
	|u(x)| = o(|x|^{-\frac{2}{p}} ) \ \text{as} \ |x| \to \infty,
\]
which seems natural and almost optimal in view of the assumption $u \in L^p(\Omega)$.
Moreover, as a corollary, we have the following Liouville-type result:
let $\Omega = \mathbb{R}^2$ and let $u$ be a classical solution to \eqref{eq:elliptic} satisfying $u \in L^p(\mathbb{R}^2)$
with some $p \in [2,\infty)$, then $u \equiv 0$ in $\mathbb{R}^2$.
The analysis of \cite{KoTeWa21} is based on the classical result by Gilbarg--Weinberger \cite{GiWe78}.
The novelty of this method is to apply the energy estimate, the integral mean value theorem for the radial variable,
the fundamental theorem of calculus for the angular variable, and the maximum principle.
We also refer readers to \cite{KoTeWa23}, \cite{SeSiSvZl12} and the references therein
for the asymptotic behavior and Liouville-type theorems
of the 3D Navier-Stokes equations and elliptic equations in general dimensions.
\par
In this paper, we study a similar almost optimal estimate including the cases $p \in [1,2)$
under different conditions on the coefficients by means of another approach.
To state our main result, we impose the following assumptions 
on the coefficients of the differential operator $L$ in (\ref{eq:elliptic}).

\noindent
\textbf{Assumption (C)}

$\{a_{ij}\}_{i, j =1, 2}\in C^1(\Omega)\cap L^\infty(\Omega)$, $\mathbf{b} =(b_1,b_2)\in C^1(\Omega)\cap C(\overline{\Omega})$ and $c \in C({\Omega})$ satisfy
\begin{itemize}
\item[\textbf{(C1)}] There exists some constant $\lambda > 0$ such that 
\[
	\sum_{i,j=1}^2 a_{ij}(x) \xi_i \xi_j \ge \lambda |\xi|^2
	\quad
	\mbox{for all $x \in \Omega$ and $\xi \in \mathbb{R}^2$; }
\]
\item[\textbf{(C2)}] $\nabla a_{ij}(x) = O(|x|^{-1})$ and $\mathbf{b}(x) = O(|x|^{-1})$ as $|x| \to \infty$; 
\item[\textbf{(C3)}] $c(x) \ge 0$ for all $x \in \Omega$; 
\item[\textbf{(C4)}] $(\diver \mathbf{b} - c)_- \in L^1(\Omega)$, 
where $f_{-}\equiv\max\{0, -f\}$.
\end{itemize}
\par
Furthermore, we impose the following assumptions on the solution $u$.
\par
\noindent
\textbf{Assumption (S)}
\begin{itemize}
\item[\textbf{(S1)}] $u \in C^{2}(\Omega) \cap C(\overline{\Omega})$; 
\item[\textbf{(S2)}] $\lim_{|x|\to \infty} u(x) = 0$; 
\item[\textbf{(S3)}] $\left. u \right|_{\partial \Omega} \ge 0$.
\end{itemize}
\par
\bigskip
Our main result now reads: 


\begin{theorem}\label{thm1} 
Let the coefficients $(a_{ij})_{i, j=1,2}$, $\mathbf{b}=(b_1, b_2)$ and $c$ satisfy 
\textup{\textbf{Assumption}} \textup{\textbf{(C)}}. 
Suppose that $u$ is the solution of (\ref{eq:elliptic}) satisfying  
\textup{\textbf{Assumption}} \textup{\textbf{(S)}}. 
Then we have the following decay property of $u$:
\begin{itemize}
\item[(i)]
If $u \in L^{p,q}(\Omega)$ with some $p \in [1,\infty)$ and $q \in [1,\infty)$,
then $u$ has the pointwise decay
$u(x) = o(|x|^{-2/p})$ as $|x| \to \infty$.
\item[(ii)]
If $u \in L^{p,\infty}(\Omega)$ with some $p \in [1,\infty)$,
then $u$ has the pointwise decay $u(x)= O(|x|^{-2/p})$ as $|x| \to \infty$.
\end{itemize}
\end{theorem}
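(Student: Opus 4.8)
The plan is to turn the integrability hypothesis on $u$ into a pointwise bound by a level-set analysis governed by the maximum principle, in the spirit of \cite{KoPiRu19}. The first step is to rewrite \eqref{eq:elliptic} in nondivergence form $-\sum_{i,j}a_{ij}\partial_i\partial_j u+\sum_j\tilde b_j\partial_j u+cu=0$ with $\tilde b_j=b_j-\sum_i\partial_i a_{ij}$, which by \textbf{(C2)} still satisfies $\tilde{\mathbf b}(x)=O(|x|^{-1})$. Since $c\ge 0$ by \textbf{(C3)}, the weak and strong maximum principles apply. Applying them on annuli $\{r_1<|x|<r_2\}$ and letting $r_2\to\infty$ (using \textbf{(S2)}), I would show that $\sup_{|x|\ge r}u$ is attained on the circle $|x|=r$ and equals $M(r):=\max_{|x|=r}u$; the companion minimum principle, which is available because \textbf{(S3)} makes the boundary trace of $-u$ nonpositive, controls $m(r):=\min_{|x|=r}u$ the same way. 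Consequently $\mu(r):=\max_{|x|=r}|u|$ is nonincreasing for large $r$ and $|u|\le\mu(r)$ on $\{|x|\ge r\}$. The same principle forbids any bounded component of $\{u>s\}$ or $\{u<-s\}$ (for $s>0$) whose closure lies in $\Omega$; as \textbf{(S2)} makes these sets bounded, every such component must reach $\partial\Omega$.

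Next I would reduce the theorem to a lower bound on the size of super-level sets. Writing $t=\mu(r)$, the goal becomes $|\{|u|>t/2\}|\gtrsim r^2$. Granting this, the weak-$L^p$ bound $|\{|u|>t/2\}|\le C\|u\|_{L^{p,\infty}}^p\,t^{-p}$ forces $r^2\lesssim t^{-p}$, that is $\max_{|x|=r}|u|\lesssim r^{-2/p}$, which is statement (ii). For (i), the finiteness $q<\infty$ gives $\|\,|u|\,\mathbf 1_{\{|x|>R\}}\|_{L^{p,q}}\to 0$ as $R\to\infty$, so rerunning the area estimate inside the region $\{|x|>R\}$ replaces the constant by a vanishing quantity and upgrades $O$ to $o$.

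The heart of the argument is thus the area bound, and I expect it to be the main obstacle. The connectivity of $U_s:=\{u>s\}$ to $\partial\Omega$ (for every $s\in(0,t)$) shows that the component containing the maximizing point has diameter $\gtrsim r$, hence its boundary curve $\{u=s\}$ has length $\gtrsim r$; but this perimeter lower bound alone does not exclude a thin ``tentacle'' of small area. To rule that out I would use an interior gradient estimate, which \textbf{(C2)} renders uniform under the natural parabolic scaling: for $|x|=2r$ one has $B(x,r)\subset\{|y|\ge r\}$, so $\sup_{B(x,r)}|u|\le\mu(r)$ and the rescaled equation yields $|\nabla u(x)|\le (C/r)\,\mu(r)$. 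I would combine this with the coarea formula and the flux identity obtained by integrating \eqref{eq:elliptic} over $U_s$,
\[
\lambda\int_{\{u=s\}}|\nabla u|\,d\mathcal H^1\ \le\ -\int_{U_s}\Bigl(\textstyle\sum_j b_j\partial_j u+cu\Bigr)\,dx+(\text{boundary terms on }\partial\Omega),
\]
where the ellipticity constant $\lambda$ from \textbf{(C1)} enters through $\sum a_{ij}\partial_j u\,\nu_i\le-\lambda|\nabla u|$ on the free boundary. Rearranging the lower-order term by $\sum_j b_j\partial_j u=\diver(u\mathbf b)-u\,\diver\mathbf b$, the contribution $-\int_{U_s}u(\diver\mathbf b-c)$ is bounded below using $c\ge0$ together with $(\diver\mathbf b-c)_-\in L^1(\Omega)$ from \textbf{(C4)} and $u\le\mu(r)$; this is exactly the role of \textbf{(C4)}. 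Integrating the resulting differential inequality in $s$ over $(t/2,t)$ and playing the perimeter lower bound against the gradient upper bound should produce $|\{|u|>t/2\}|\gtrsim r^2$, completing the reduction.
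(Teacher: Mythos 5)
Your overall skeleton (positivity and monotonicity from the maximum principle, level sets reaching the obstacle, a flux identity combined with the coarea formula, then the distribution function against the Lorentz norm) is the same as the paper's, and your reduction of (i) and (ii) to the bound $|\{|u|>t/2\}|\gtrsim r^2$ with $t=\mu(r)$ is sound. The gap is exactly where you predicted it: the flux estimate. Your plan fails for three concrete reasons, all stemming from the choice to integrate the equation over the superlevel set $U_s=\{u>s\}$ rather than over its exterior. First, $U_s$ reaches $\partial\Omega$ (you prove this yourself), so $\partial U_s$ contains a portion of $\partial\Omega$, and the flux identity produces the term $\int_{\partial U_s\cap\partial\Omega}(A\nabla u)\cdot\nu\,dS$; under \textbf{(S1)} the solution is only $C^2(\Omega)\cap C(\overline{\Omega})$, so $\nabla u$ need not even exist on $\partial\Omega$, and no bound for it is available --- ``boundary terms on $\partial\Omega$'' cannot be waved away. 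Second, the sign of the lower-order term is wrong for \textbf{(C4)}: integrating over $U_s$ and using $\mathbf{b}\cdot\nabla u=\diver(u\mathbf{b})-u\diver\mathbf{b}$ gives
\[
\lambda\int_{\{u=s\}}|\nabla u|\,d\mathcal{H}^1 \le \int_{\partial U_s\cap\partial\Omega}(A\nabla u)\cdot\nu\,dS-\int_{\partial U_s}u\,\mathbf{b}\cdot\nu\,dS+\int_{U_s}u\,(\diver\mathbf{b}-c)\,dx,
\]
and an upper bound for the last integral requires $(\diver\mathbf{b}-c)_+\in L^1$, whereas \textbf{(C4)} controls only the negative part; your sentence about bounding $-\int_{U_s}u(\diver\mathbf{b}-c)$ from below has the inequality backwards relative to what is needed. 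Third, ``$u\le\mu(r)$ on $U_s$'' is false: $U_s$ contains points near $\partial\Omega$ where $u$ is of size $\max_{\overline{\Omega}}u=O(1)$. Consequently, even after repairing the region (say, cutting along $\partial B_R$ instead of $\partial\Omega$, where $\nabla u$ is controlled), the best flux bound your scheme can produce is $\int_{\{u=s\}}|\nabla u|\,d\mathcal{H}^1\le C$ rather than $\le Cs$. This loss is fatal for the stated rate: feeding an $O(1)$ flux bound into your own coarea/Cauchy--Schwarz argument (perimeter $\gtrsim r$, so $\int_{\{t/2<u<t\}}|\nabla u|\,dx\gtrsim rt$, while $\int_{\{t/2<u<t\}}|\nabla u|^2\,dx\lesssim t$) yields only $|\{u>t/2\}|\gtrsim r^2t$, hence $t\lesssim r^{-2/(p+1)}$, strictly weaker than $r^{-2/p}$.

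The missing idea is the paper's choice of region: for $t$ below $t_*=\min_{\partial B_R}u$ the level set $u^{-1}(t)$ is a single closed curve enclosing $B_R$, and one integrates $Lu\,\eta_\rho$ over the \emph{unbounded} component $\Omega_t$ exterior to that curve, with a cut-off $\eta_\rho$ at scale $\rho\to\infty$. There the only genuine boundary is the level curve itself; $\partial\Omega$ enters only through $\int_{\partial\Omega}\mathbf{n}\cdot\mathbf{b}\,dS$ (the divergence theorem applied to $\mathbf{b}$, which is continuous up to $\partial\Omega$, over $\Omega\setminus\Omega_t$), never through $\nabla u$. Every error term then carries the factor $u<t$, the cut-off terms are $O(t)$ by \textbf{(C2)}, and the lower-order term appears as $\int_{\Omega_t}(c-\diver\mathbf{b})\,u\,\eta_\rho\,dx\le t\|(\diver\mathbf{b}-c)_-\|_{L^1}$, i.e., with precisely the sign that \textbf{(C4)} controls. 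This is what yields the linear-in-$t$ bound $\int_{u^{-1}(t)}|\nabla u|\,dS\le C_*t$ that your argument needs; your interior gradient estimate and perimeter lower bound are correct but cannot substitute for it.
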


\begin{remark}\label{rem:1.1}{\rm (i)}
The decay rate $|x|^{-2/p}$ are almost optimal in view of the assumptions
$u \in L^{p,q}(\Omega)$ and $u \in L^{p,\infty}(\Omega)$ in (i) and (ii), respectively.
Compared with the previous result \cite{KoTeWa21}, we refine the range of 
integral exponent $p$ denoting the decay of the solution $u$ at spatial infinity 
from $L^p(\Omega)$ with $p \in [2,\infty)$ to
$L^{p,q}(\Omega)$ with $p\in [1,\infty), q \in [1,\infty]$.
Concerning the assumption on the coefficients,
{\rm {\textbf{Assumption (C)}}} requires that they need to be bounded, which is stronger than \cite{KoTeWa21}.
On the other hand, the conditions on $\diver \mathbf{b}$ in {\rm \textbf{(C4)}} 
is assumed to have an integral form,
which is weaker than \cite{KoTeWa21}.
We also remark that in the case $\diver \mathbf{b} = 0$, {\rm \textbf{(C4)}} is replaced by the assumption that $c \in L^1(\Omega)$.
\par
{\rm (ii)} Indeed, the following example shows that our decay rate such as (i) and (ii) in Theorem \ref{thm1} is optimal.  Assume that $\Omega$ is an exterior domain 
with
$0 \in (\bar{\Omega})^c = \re^2\setminus \bar{\Omega}$.
Consider the case $a_{ij}(x) = \delta_{ij}$, $i, j =1,2$, 
$\mathbf{b}(x) = \frac 2p |x|^{-2}(x_1, x_2)$ and $c(x)=0$ for $1 \le p  < \infty$. 
Obviously, for such coefficients  Assumption (C) is fulfilled. 
Let $u(x) = |x|^{-\frac 2p}$.  It is easy to see that $u$ is a solution of (\ref{eq:elliptic})  
satisfying Assumption (S). Since this $u$ satisfies that $u \in L^{p, \infty}(\Omega)$ and 
$u \in  L^{r, q}(\Omega)$ for all $p < r < \infty$ and $1 \le q \le \infty$, we see that 
our decay rate as in Theorem \ref{thm1} is optimal under the general setting of 
Asumptions (C) and (S).   
\end{remark}

Although the idea of our proof is based on the argument by Korobkov--Pileckas--Russo \cite{KoPiRu19}, 
our method seems so refined as to be applicable to generalized elliptic equations 
in two-dimensional exterior domains.
For the vorticity equation \eqref{eq:vor} they proved that 
$\omega(x) = o(|x|^{-1})$
under the condition of the finite Dirichlet integral by making use of the fact 
that the level sets of $\omega$ separate infinity from the origin. 
In contrast to their method, the first key point of our proof is to show that 
the level sets of $u$ except for a set of values of measure zero
consist of a family of disjoint closed curves containing the obstacle 
$\Omega^c=\re^2\setminus \Omega$ and of other connected components within such closed curves.
It should be noted that they \cite{KoPiRu19} do not need to handle 
the latter harmful connected components of the level sets 
because the vorticity equation \eqref{eq:vor} has such a simple structure 
as $\omega$ attains neither maximum nor minimum in any interior sub-region of $\Omega$.  
On the other hand, our elliptic operator $L$ in \eqref{eq:elliptic} has a general structure 
with the lower order term such as Assumption (C3) so that only non-negative maximum and 
non-positive minimum of $u$ cannot be attained. This is a crucial difference of dealing with 
the level sets of solutions between \cite{KoPiRu19} and our case.  
\par       
The second one is to estimate the integral of the gradient of the solution on the level set curve,
from which and the coarea formula we obtain a bound of the length of level set curve. 
By using this bound and the spetial property of the two-dimensional geometry, 
we are able to show the desired pointwise estimate of the solution.
In \cite{KoPiRu19}, they made use of such a priori bounds as
$\omega \in L^2(\Omega)$ and $\nabla \omega \in L^2(\Omega)$,
which had been already proved by the pioneer work by Gilbarg--Weinberger  \cite{GiWe78}.  
In this paper, we remove such an assumption on a priori estimates and 
successfully modify their argument by using a cut-off method.
\par
\vskip2mm
Finally in this section, we introduce the notations used throughout this paper.
The letter $C$ indicates a generic constant which may change from line to line.
Sometimes we use the notation $C (\ast,\ldots,\ast)$ for a constant depending only on the quantities in the parentheses.
For $R > 0$, we denote
$B_R:= \{ x \in \mathbb{R}^2 ; \, |x| < R \}$.
For a function $f=f(x)$,
$f_+ := \max \{ f, 0 \}$ and $f_- := f_+ - f$
are the positive and negative parts of $f$, respectively.
For a Lebesgue measurable set $E$ in $\mathbb{R}^2$,
$|E|$ stands for the Lebesgue measure of $E$.
Let $\mathcal{H}^1(F)$ be the $1$-dimensional Hausdorff measure of $F$.
\par
For a simple closed curve $\gamma$ in $\mathbb{R}^2$,
the Jordan curve theorem implies that $\gamma$ divides $\mathbb{R}^2$ into
the bounded interior region and the unbounded exterior region.
We call these regions $\Int{\gamma}$ and $\Ext{\gamma}$, respectively.
\par
For $1 \le p < \infty$ and $1 \le q \le \infty$,
$L^{p,q}(\Omega)$ denotes the Lorentz space defined by
\[
	L^{p,q}(\Omega) = \left\{ f : \Omega \to \mathbb{R} ;\, \| f \|_{L^{p,q}} < \infty \right\}
\]
with
\[
	\| f \|_{L^{p,q}} = 
				\begin{dcases}
				\left( p\int_0^{\infty} t^q \left( | \{ x \in \Omega ;\, |f(x)| \ge t \} | \right)^{q/p} \,\frac{dt}{t} \right)^{1/q}
				&(1 \le q < \infty),\\
				\sup_{t>0} t | \{ x \in \Omega ;\, |f(x)| \ge t \} |^{1/p}
				&(q= \infty).
				\end{dcases}
\]

\section{Proof of Theorem \ref{thm1}}
\subsection{Geometry of level sets} 
Since our method is based on the growth rate of the level set of solutions $u$ to (\ref{eq:elliptic}),
we need to investigate its geometric properties. 
First, if $u \equiv 0$, the claim of the theorem obviously holds.
Thus, noting the condition \textbf{(S2)}, we may suppose that $u$ is not identically a constant.

By the assumption $c \ge 0$,
the operator $L$ has the strong maximum principle
(see e.g., \cite[Theorem 3.5]{GiTr}).
Let $u$ be a smooth solution to \eqref{eq:elliptic} satisfying \textbf{Assumption (S)}.
Then, by the strong maximum principle, $u$ takes neither a non-negative maximum nor non-positive minimum
in the interior of $\Omega$.
Therefore, if $u$ attains a negative value, then it contradicts the condition \textbf{(S2)}.
Thus, $u$ must be nonnegative in $\Omega$.
Then, using the strong maximum principle again, we conclude that $u$ is positive in the interior of $\Omega$.

Let $B_R$ be an open ball with radius $R>0$ satisfying 
$\Omega^c = \re^2\setminus \Omega\subset B_R$.
Then, we have
\[
	t_* := \min_{x \in \partial B_R} u(x) > 0.
\]

Next, the Morse--Sard theorem (see e.g., \cite{Mi65} for a simple proof)
implies that almost every $t \in (0,t_*)$ is a regular value of $u$.
Let us define $\mathcal{I}$ by 
\begin{equation}
\label{eq:I}
	\mathcal{I} := (0,t_*) \setminus \mathrm{Cr}(u),
\end{equation}
where $\mathrm{Cr}(u) :=  \{u(x); \,\nabla u(x) = \mathbf{0},\, x \in B_R^c=\re^2\setminus  B_R\}$ denotes the set of critical values of $u$.   
Thus, it follows from the implicit function theorem that for any $t\in \mathcal{I}$,
the level set $u^{-1}(t)$ is a union of smooth simple closed curves.
Moreover, we have the following proposition on the geometric property of $u^{-1}(t)$. 
\begin{proposition}\label{pr:2.1} 
Let $u$ be a solution to \eqref{eq:elliptic} satisfying \textbf{Assumption (S)}.
For every $t \in \mathcal{I}$,
$u^{-1}(t)$ has a unique connected component $\gamma(t)$
such that $B_R \subset \Int \gamma(t)$.
Further, there exists no connected component of $u^{-1}(t)$ which lies
in $\Ext \gamma(t)$.
\end{proposition} 
\begin{proof}   
We first show that there exists at least one connected component,   
say $\gamma(t)$, of $u^{-1}(t)$, which contains $B_R$ inside.  
Suppose the contrary, which means that all connected components of $u^{-1}(t)$ 
lie outside of $B_R$.  Then there are some point $x_\ast \in \partial B_R$ 
and a continuous curve $\{\mu(s)\in \re^2; 0 \le s < \infty \}$ with 
$\mu(0)= x_\ast$ and $\lim_{s\to\infty}\mu(s) = \infty$ such that 
for all $0 \le s < \infty$, 
$\mu(s)$ does not intersect any connected component of $u^{-1}(t)$.  
Since $u \in C(\overline {\Omega})$ and since  
$u(\mu(0)) = u(x_\ast) \ge t_\ast > t >0$ with $\lim_{s\to\infty}u(\mu(s))=0$, 
it follows from the intermediate value theorem that there is some $0 < s_\ast < \infty$ such that $u(\mu(s_\ast)) = t$, which implies that 
$\mu(s_\ast) \in u^{-1}(t)$.  This causes a contradiction. 
\par 
Next, we show the uniqueness of $\gamma(t)$ above and
that there is no connected component of $u^{-1}(t)$ which lies in $\Ext \gamma(t)$.
Since $\lim_{|x|\to\infty}u(x)=0$, we see again by the maximum principle that 
$u$ does not attain the non-negative maximum in
$\Ext \gamma(t)$,
which yields that $u(x) < t$ in
$\Ext \gamma(t)$.
From this, we conclude the uniqueness of the connected component of $u^{-1}(t)$ containing $B_R$ inside.
In fact, besides $\gamma(t)$ above,
if there exists another connected component $\tilde{\gamma}(t)$ of $u^{-1}(t)$ satisfying
$B_R \subset \Int \tilde{\gamma}(t)$,
then we have either $\tilde{\gamma}(t) \subset \Ext \gamma(t)$ or $\gamma(t) \subset \Ext \tilde{\gamma}(t)$.
However, this is impossible because $u(x) < t$ in $\Ext \gamma(t)$ and $\Ext \tilde{\gamma}(t)$.
The same argument also shows that no connected component of $u^{-1}(t)$ can exist in $\Ext{\gamma}(t)$.
This proves Proposition.
\end{proof}

The simple closed curve $\gamma(t)$, whose existence is guaranteed by Proposition \ref{pr:2.1},
divides $B_R^c$ into two parts $\Int \gamma(t) \cap B_R^c$ and $\Ext \gamma(t)$.
In what follows, we also denote $\Ext \gamma(t)$ by $\Omega_t$.
Then, by the assumption $\lim_{|x|\to \infty} u(x) = 0$,
we have
$\Omega_t \subset \Omega_s$ if $t, s \in \mathcal{I}$ and $t<s$.
Indeed, otherwise we have $\Omega_s \subset \Omega_t$.
However, it implies that $u$ takes the maximum inside $\Omega_t$, which contradicts the maximum principle.
In particular, we remark that $u(x) < t$ holds in $\Omega_t$.
\vskip 4mm
\begin{center}
\begin{tikzpicture}
\draw[fill = lightgray] (0.4,0) .. controls +(90:0.7) and +(0:0.1) .. (0,0.6)
			.. controls +(180:0.1) and +(90:0.4) .. (-0.4,0)
			.. controls +(270:0.4) and +(180:0.1) .. (0,-0.6)
			.. controls +(0:0.1) and +(270:0.7) .. (0.4,0);
\node at (0,0) {$\Omega^c$};

\draw (0,0) circle[radius=1];
\node at (0.6,0.5) {$B_R$};
\draw (0,1.5) to[out=0, in=120] ++(1, -0.5) to[out=300, in=135] ++(0.8, -1)
to[out =315, in=90] ++(0.5, -0.5) node[left] {$\gamma(s)$}
to[out=270, in=0] ++(-2.5, -1)
to[out=180, in=270] ++(-1,1) to[out=90, in=180] cycle; 
\draw (0,2) to[out=0, in=120] ++(1.4, -0.5) to[out=300, in=135] ++(1.6, -1.5)
to[out=315, in=90] ++(1,-0.8) node[left] {$\gamma(t)$}
to[out=270, in=0] ++(-3,-1.5) to[out=180, in=270] ++(-3,2) to[out=90, in=180] cycle;
\node at (3,1) {$\Omega_t$};
\end{tikzpicture}
\end{center} 
\vskip 4mm
It should be noted that there may exist a connected component of $u^{-1}(t)$ inside of $\gamma(t)$.  
Such a component causes a difficulty to make use of the coarea formula for $\nabla u$.   
The following proposition plays an important role for avoiding this difficulty.   
\begin{proposition}\label{pr:2.2} 
Let $u$ be a smooth solution to \eqref{eq:elliptic} satisfying \textbf{Assumption (S)}.
\par
\noindent
{\rm (i)} It holds that both $t$ and $t/2$ belong 
to $\mathcal{I}$ for almost every $t \in (0, t_\ast)$. 
\par
\noindent
{\rm (ii)} Let 
\begin{equation}
\label{eq:Itilde}
	\widetilde{\mathcal{I}} := \{t \in (0, t_\ast);\,\, \frac t2,\,\, t \in \mathcal{I} \}.   
\end{equation}
Suppose that $t \in \widetilde{\mathcal{I}}$.  
We set 
\[
	\tilde E_t := \{x \in \Omega_t\setminus \bar \Omega_{\frac t2}; \, 
	u(x) \in \left(\frac t2, t\right), \, \, \nabla u(x) \ne 0\}.  
\]
For $s \in (\frac t2, t) \cap \mathcal{I}$ we define 
\begin{align*}
	\omega_s&:=\{x \in \tilde E_t\cap \Int \gamma(s) ; \,\, u(x) < s\}, 
	\quad
	\omega:= \bigcup_{s\in(\frac t2, t)\cap \mathcal{I}}\omega_s, \\  
	E_t: &=\tilde E_t\setminus\omega =  
		\{x \in \Omega_t\setminus \bar \Omega_{\frac t2}; \, x \notin \omega, \, 
		u(x) \in \left(\frac t2, t\right), \, \, \nabla u(x) \ne 0\}.  
\end{align*}
Then there exists a null set 
$G_t \subset \Omega_t\setminus \bar \Omega_{\frac t2}$ such that 
\[
	E_t =  \bigcup_{s \in (\frac t2, t) \cap \mathcal{I}}\gamma(s) \cup G_t.   
\]
\end{proposition}
\begin{remark}
\textup{(i)}
Roughly speaking,
$\tilde{E}_t$
is the region consisting of regular points of $u$ between $\gamma(t/2)$ and $\gamma(t)$.
$\omega_s$ is the region in $\Int \gamma(s)$ in which $u(x) < s$ holds,
and $\omega$ is the union of all such $\omega_s$ with $s \in (t/2,t)$.
Here, we remak that the maximum principle for $u(x)$ does not exclude
the possibility that $u(x)$ takes the positive minimum, and hence,
the region $\omega$ may exist.
The following figure shows an example that how $\omega$ appears.
\\
\textup{(ii)}
Later, we want to use the set
$\bigcup_{s \in (\frac t2, t) \cap \mathcal{I}}\gamma(s)$
as an integral region.
However, it is not clear whether this set is measurable or not.
Therefore, we define the region $E_t$ by subtracting $\omega$ from $\tilde{E}_t$.
By definition, $E_t$ is obviously Lebesgue measurable.
Moreover, the above proposition shows that
$E_t$ consists of the union of the set $\bigcup_{s \in (\frac t2, t) \cap \mathcal{I}}\gamma(s)$
and a set of zero Lebesgue measure.
\end{remark}
\begin{center}
\begin{tikzpicture}[every plot/.style={smooth cycle}, xscale=1.4, yscale=1.2]

\draw[fill = lightgray] (0.4,0) .. controls +(90:0.7) and +(0:0.1) .. (0,0.6)
			.. controls +(180:0.1) and +(90:0.4) .. (-0.4,0)
			.. controls +(270:0.4) and +(180:0.1) .. (0,-0.6)
			.. controls +(0:0.1) and +(270:0.7) .. (0.4,0);
\node at (0,0) {$\Omega^c$};

\draw (0.6, 0) .. controls + (90:0.5) and +(270:0.5) .. (0.8,1)
			.. controls +(90:0.1) and +(0:0.1) .. (0.6, 1.2)
			.. controls +(180:0.8) and +(90:0.4) .. (-0.6,0)
			.. controls +(270:0.4) and +(180:0.8) .. (0.6,-1.2)
			.. controls +(0:0.1) and +(270:0.1) .. (0.8,-1)
			.. controls +(90:0.5) and +(270:0.5) .. (0.6,0);
			
\draw (0.8,0) .. controls +(90:0.6) and +(200:0.2) .. (1.4,1.2)
			.. controls +(20:0.1) and +(270:0.1) .. (1.6,1.4)
			.. controls +(90:0.1) and +(0:0.1) .. (1.4,1.6)
			.. controls +(180:1) and +(90:1) .. (-0.8,0)
			.. controls +(270:1) and +(180:1) .. (1.4,-1.6)
			.. controls +(0:0.1) and +(270:0.1) .. (1.6,-1.4)
			.. controls +(90:0.1) and +(0:0.1) .. (1.4,-1.2)
			.. controls +(160:0.2) and +(270:0.6) .. (0.8,0);
			
\draw (1.2,0) .. controls +(90:0.4) and +(180:0.1) .. (1.8,0.8)
			.. controls +(0:0.2) and +(180:0.1) .. (2.2,0.4)
			.. controls +(0:0.1) and +(270:0.1) .. (2.4,0.8)
			.. controls +(90:0.8) and +(0:0.8) .. (1,2)
			.. controls +(180:1.2) and +(90:0.7) .. (-1,0)
			.. controls +(270:0.7) and +(180:1.2) .. (1,-2)
			.. controls +(0:0.8) and +(270:0.8) .. (2.4,-0.8)
			.. controls +(90:0.1) and +(0:0.1) .. (2.2,-0.4)
			.. controls +(180:0.1) and +(0:0.2) .. (1.8,-0.8)
			.. controls +(180:0.1) and +(270:0.4) .. (1.2,0);
			
\draw[fill=lightgray] (2.2,0) .. controls +(135:0.1) and +(0:0.1) .. (1.8,0.5)
			.. controls +(180:0.1) and +(90:0.2) .. (1.4,0)
			.. controls +(270:0.2) and +(180:0.1) .. (1.8,-0.5)
			.. controls +(0:0.1) and +(225:0.1) .. (2.2,0);
\draw (2.2,0) .. controls +(45:0.1) and +(270:0.5) .. (2.8,0.8)
			.. controls +(90:1) and +(0:0.5) .. (1,2.4)
			.. controls +(180:1.4) and +(90:0.8) .. (-1.4,0)
			.. controls +(270:0.8) and +(180:1.4) .. (1,-2.4)
			.. controls +(0:0.5) and +(270:1) .. (2.8,-0.8)
			.. controls +(90:0.5) and +(315:0.1) .. (2.2,0);

\draw[thick] (2,0) .. controls +(120:0.1) and +(0:0.1) .. (1.8,0.2)
			.. controls +(180:0.1) and +(90:0.1) .. (1.6,0)
			.. controls +(270:0.1) and +(180:0.1) .. (1.8,-0.2)
			.. controls +(0:0.1) and +(240:.1) .. (2,0);
\draw[thick] (2.8,0) .. controls +(90:0.1) and +(270:0.5) .. (3.2,0.8)
			.. controls +(90:1) and +(0:1) .. (1,2.8)
			.. controls +(180:1.4) and +(90:1.4) .. (-1.8,0)
			.. controls +(270:1.4) and +(180:1.4) .. (1,-2.8)
			.. controls +(0:1) and +(270:1) .. (3.2,-0.8)
			.. controls +(90:0.5) and +(270:0.1) .. (2.8,0);

\node at (1.8,0.35) {$\omega$};

\draw[dotted, thick] (1.8,-0.2) .. controls +(270:0.2) and +(180:2) .. (4,-1) node[right] {$u^{-1}(t)$};
\draw[dotted, thick] (3.2,0.8) .. controls +(0:0.2) and +(90:0.2) .. (3.8,0.2) node[below] {$\gamma(t)$};

\node at (2.2,0) {$\bullet$};
\draw[dotted, thick] (2.2,0) .. controls +(90:1) and +(180:2) .. (4.4,1.6) node[right] {$\mathrm{Cr}(u)$};

\end{tikzpicture}

\end{center}

{\it Proof of Proposition \ref{pr:2.2} .} (i) Defining the set 
$J :=\{t\in (0, t_\ast);\,\,\, t \in \mbox{Cr}(u)\,\,\,\mbox{or}\,\,\,  \frac{t}{2}\in \mbox{Cr}(u)\}$, we may prove that $|J| =0$.  Consider the set $K := (0, \frac{t_\ast}{2}) \cap \mbox{Cr}(u)$.  By the Morse-Sard theorem, it holds that $|K| =0$, which yields that 
$|2K| = |\{2s; \,\,\, s\in K\}| =0$.  
Since 
$$
J = \left((0,\,  t_\ast) \cap \mbox{Cr}(u)\right)\cup 2K, 
$$  
we have again by the Morse-Sard theorem that 
$$
|J| \le |(0,\,  t_\ast) \cap \mbox{Cr}(u)| + |2K| =0.  
$$
\par
(ii) The null set $G_t$ is indeed given by 
\[
	G_t = \{x \in \Omega_t\setminus \bar \Omega_{\frac t2}; \, x \notin \omega, \, 
	u(x) \in \left(\frac t2, t\right)\setminus\mathcal{I}, \, \, \nabla u(x) \ne 0\}.
\]
We first show that $|G_t| = 0$.
Let $x^0 \in G_t$, and put $t_0 = u(x^0)$. 
Since $\nabla u(x^0) \ne \mathbf{0}$, it follows from the implicit function 
theorem that there exists neighborhood $U$ if $x^0$, an open square 
$V=\{y=(y_1, y_2 )\in\mathbb{R}^2; x_1^0-\delta< y_1< x_1^0+\delta, \, 
t_0-\delta < y_2< t_0 + \delta\}$ with some $\delta>0$, 
and a $C^2$-diffeomorphism $\psi:U \to V$ such that 
\[
	\psi(x) \in (y_1-\delta, y_1 + \delta)\times \{ s \}
	\quad\mbox{for all $x \in u^{-1}( s )\in U$}. 
\]   
Defining $\phi=\psi^{-1}:V\to U$, we have by the Morse--Sard theorem that 
$|\phi^{-1}(G_t \cap U)|\le 2\delta|\mbox{Cr}(u)| =0$.   
Hence, it follows from the area formula 
(see, e.g., Evans-Gariepy \cite[Theorem 3.8]{EvGa} ) that 
\[
	0 = \int_{\phi^{-1}(G_t \cap U)}|\nabla \phi(y)|dy = \int_{G\cap U}dx 
	=|G\cap U|.  
\]
By the Lindel\"of covering theorem, the set $G_t$ is covered by the countably 
many open sets as above $U$ so that we may prove $|G_t| = 0$. 
\par    
We next show that 
\[
	E_t =  \bigcup_{s \in (\frac t2, t) \cap \mathcal{I}}\gamma(s) \cup G_t.   
\]
Obviously, it holds that
$ \bigcup_{s \in (\frac t2, t) \cap \mathcal{I}}\gamma(s) \cup G_t\subset E_t$,
and hence we may prove the converse inclusion 
relation.    
Let $x \in E_t$.  We have that $x \notin \omega$ 
and $s := u(x) \in (\frac t2, t)$ with $\nabla u(x)\ne \mathbf{0}$.  
Suppose that $x \notin G_t$, and we have $s \in \mathcal{I}$. 
Assume that $x \notin \gamma(s)$.  Then we see that $x$ is an element 
of another connected component of $u^{-1}(s)$ inside of $\gamma(s)$.  
Since $\nabla u(x) \ne \mathbf{0}$, 
it follows from the implicit function theorem that there is a 
neighborhood $U_x$ of $x$ such that every level set of $u$ contained 
in $U_x$ is a smooth curve.  Since the connected component containing 
$x$ is compact, it is covered by finitely many such neighborhoods $U_x$.  
Hence, there exists $\tilde s \in (\frac t2, t) \cap \mathcal{I}$ 
such that $s < \tilde s$ and $x \in \omega_{\tilde s} $.  
Since $\omega_{\tilde s} \subset \omega$, we have a contradiction.  
Hence it holds that $x \in \gamma(s)$ for 
$s \in (\frac t2, t) \cap \mathcal{I}$.        
This proves Proposition \ref{pr:2.2}.  \qed
\subsection{Key lemma}
The following is the key lemma to prove Theorem \ref{thm1}.

\begin{lemma}\label{lem2}
Let Assumptions (C) and (S) hold.  
Assume that $R$, $t_\ast$ and $\mathcal{I}$ are the same as in Subsection 2.1.  
Let $\tilde{\mathcal {I}}$ be the set defined in Proposition \ref{pr:2.2}.    
For every $p \in (0,\infty)$,
there exists a constant
$C = C(R,a_{ij},\mathbf{b},c,p) > 0$ such that
\begin{equation}\label{eq:thm1}
	u(x) \le C |x|^{-\frac{2}{p}} \left( \int_{E_t } |u(y)|^p \,dy \right)^{1/p}
\end{equation}
holds for all $t \in \widetilde{\mathcal{I}}$ 
and all $x \in \gamma(t)$. 
\end{lemma}
{\it Proof.}
We shall use the same notations as those in the previous subsection.
Let $t \in \widetilde{\mathcal{I}}$ and let $\rho > R$ be a sufficiently large parameter satisfying
$\partial \Omega_t = \gamma(t) \subset B_{\rho}$.
Define a cut-off function $\eta_{\rho}(x)$ by
\[
	\eta_{\rho} (x) := \eta \left( \frac{x}{\rho} \right),
	\quad \text{where} \quad
	\eta \in C_0^{\infty}(\mathbb{R}^2), \ 0 \le \eta \le 1, \ 
	\eta(x) = \begin{cases} 1 &(|x| \le 1),\\ 0 &(|x| > 2). \end{cases}
\]
By the definition, it is easy to see that 
$$
	| \nabla^k \eta_{\rho}(x) | \le C \rho^{-k}, 
	\quad
	\supp\nabla^k \eta_{\rho} \subset\{x \in \re^2; \rho< |x| < 2\rho\} \quad (k=1,2).
$$
Multiplying the equation \eqref{eq:elliptic} by $\eta_{\rho}$,
and then integrating the resulting identity over $\Omega_t$
and integrating by parts twice,
we obtain
\begin{align}\label{eq:int:eq}
	0 &=
	\int_{\Omega_t} L u (x) \eta_{\rho}(x) \,dx \\
	&= - \int_{\gamma(t)} \sum_{i,j=1}^2 \frac{\partial_i u}{|\nabla u|} a_{ij}(x)\partial_j u(x) \eta_{\rho}(x)\,dS \\
	&\quad
	+ \int_{\gamma(t)} \sum_{i,j=1}^2 \frac{\partial_j u}{|\nabla u|} a_{ij}(x) u(x) \partial_i \eta_{\rho}(x) \,dS 
	\\
	&\quad
	- \int_{\Omega_t} \sum_{i,j=1}^2  \partial_j ( a_{ij}(x) \partial_i \eta_{\rho}(x) ) u(x) \,dx
	\\
	&\quad
	+ \int_{\gamma(t)} \frac{\nabla u}{|\nabla u|} \cdot \mathbf{b}(x) u (x) \eta_{\rho}(x) \,dS
	- \int_{\Omega_t} u(x) \mathbf{b}(x) \cdot \nabla \eta_{\rho}(x) \,dx
	\\
	&\quad
	+ \int_{\Omega_{t}} (-\diver \mathbf{b}(x) + c(x)) u (x) \eta_{\rho}(x) \,dx,
\end{align}
where we remark that the unit outward normal vector on
$\partial \Omega_t = \gamma(t)$
is
$\nabla u/ |\nabla u|$,
and the second term of the right-hand side vanishes thanks to
$\nabla \eta_{\rho} = 0$ on $\gamma(t)$.
First, by the assumption $\mathbf{(C4)}$
and the fact that $0 < u(x) < t$ in $\Omega_t$, we see that 
the last term is estimated as
\[
	 \int_{\Omega_{t}} (-\diver \mathbf{b}(x) + c(x)) u (x) \eta_{\rho}(x) \,dx
	 \le t \| ( \diver \mathbf{b} - c )_{-} \|_{L^1(\Omega)}.
\]
Next, for the second and fourth terms of the right-hand side of \eqref{eq:int:eq},
by the assumption $\mathbf{(C2)}$ and again by the fact that $0 < u(x) < t$  in $\Omega_t$, 
we have
\begin{align*}
	&- \int_{\Omega_t} \sum_{i,j=1}^2  \partial_j ( a_{ij}(x) \partial_i \eta_{\rho}(x) ) u(x) \,dx \\
	&\quad
	\le
	t \int_{B_{2\rho} \setminus B_{\rho}} \sum_{i,j=1}^2
		\left| \partial_j a_{ij}(x) \partial_i \eta_{\rho}(x) + a_{ij}(x) \partial_i \partial_j \eta_{\rho}(x) \right| \,dx \\
	&\quad
	\le Ct \int_{B_{2\rho} \setminus B_{\rho}} \sum_{i,j=1}^2 
		(|\partial_j a_{ij}(x)|\rho^{-1} + |a_{ij}(x)|\rho^{-2})\, dx \\
	&\quad
	\le C t
\end{align*}
and
\begin{align*}
	- \int_{\Omega_t} u(x) \mathbf{b}(x) \cdot \nabla \eta_{\rho}(x) \,dx
	&\le
	t \int_{B_{2\rho}\setminus B_{\rho}} | \mathbf{b}(x) \cdot \nabla \eta_{\rho}(x) | \,dx \\
	& \le
	Ct\rho^{-1}\int_{B_{2\rho}\setminus B_{\rho}} | \mathbf{b}(x)|dx \\
	&\le 
	C t.
\end{align*}
Finally, the third term of the right-hand side is calculated as
\begin{align*}
	\int_{\gamma(t)} \frac{\nabla u}{|\nabla u|} \cdot \mathbf{b}(x) u (x) \eta_{\rho}(x) \,dS
	&=
	t \int_{\gamma(t)} \frac{\nabla u}{|\nabla u|} \cdot \mathbf{b}(x) \,dS \\
	&=
	-t \left[ \int_{\Omega \setminus \Omega_t} \diver \mathbf{b}(x) \,dx
	 		- \int_{\partial \Omega} \mathbf{n}\cdot \mathbf{b}(x) \,dS
	\right] \\
	&\le
	t \| (\diver \mathbf{b})_- \|_{L^1(\Omega)} + C \| \mathbf{b} \|_{L^{\infty}(\partial\Omega)} t .
\end{align*}
It should be noted by Assumptions \textbf{(C3)} and \textbf{(C4)} that 
$0\le (\diver \mathbf{b})_{-}\le (\diver \mathbf{b} - c)_- $, which yields that 
$(\diver \mathbf{b})_{-} \in L^1(\Omega)$.    
Putting the above estimates together to \eqref{eq:int:eq}, we conclude
\begin{align}
	\int_{\gamma(t)} \frac{1}{|\nabla u|} \sum_{i,j=1}^2 a_{ij}(x) \partial_i u \partial_j u \,dS
	\le
	C(a_{ij}, \mathbf{b}, c) t
\end{align}
with some constant $C(a_{ij}, \mathbf{b}, c) >0$.
Furthermore, by  $\mathbf{(C1)}$ and the above estimate, we have
\begin{align}
	\int_{\gamma(t)}  |\nabla u|  \,dS
	&\le \lambda^{-1} \int_{\gamma(t)} \frac{1}{|\nabla u|} \sum_{i,j=1}^2 a_{ij}(x) \partial_i u \partial_j u \,dS 
	\le C_* t
\end{align}
with some constant
$C_* = C_*(a_{ij},\mathbf{b},c, \lambda) > 0$. 
\par
For $t \in \widetilde{\mathcal{I}}$, it follows from Proposition \ref{pr:2.2} and 
the coarea formula (see, e.g.,  \cite{EvGa}) that 
\begin{equation}\label{eq:coarea}
	\int_{E_t} f |\nabla u| \,dx = \int_{t/2}^{t} \left( \int_{\gamma(\tau)} f \,dS \right) \,d\tau
\end{equation}
for all $f \in C(\Omega)$.  
Taking $f = |\nabla u|$ in (\ref{eq:coarea}), we have 
\begin{align}
	\int_{E_t} |\nabla u|^2 \,dx
	&= \int_{t/2}^{t} \left( \int_{\gamma(\tau)} |\nabla u| \,dS \right) \,d\tau \\
	&\le \int_{t/2}^{t} C_* \tau \,d\tau \\
	&\le C_*t^2.
\end{align}
Furthermore, application of \eqref{eq:coarea} to $f =1$ enables us to obtain 
\begin{align}
	\int_{t/2}^{t} \left( \int_{\gamma(\tau)} 1 \,dS \right) \,d\tau
	&=
		\int_{E_t} |\nabla u| \,dx \\
	&\le
		|E_t|^{1/2} \left( \int_{E_t} |\nabla u|^2 \,dx \right)^{1/2} \\
	&\le
		\left( C_*|E_t| t^2 \right)^{1/2} \\
	&\le \left( 2^pC_*  t^{2-p} \int_{E_t} |u(x)|^p \,dx \right)^{1/2}.
\end{align}
From the above estimate,
we have that for every $t \in \widetilde{\mathcal{I}}$, there exists $\tau \in [t/2,t]$ such that
\begin{equation}\label{eq:est:lvset}
	\frac{t}{2} \mathcal{H}^{1} (\gamma(\tau) ) \le \left( 2^pC_*  t^{2-p} \int_{E_t} |u(x)|^p \,dx \right)^{1/2}.
\end{equation}
In fact, if
\[
	\frac{t}{2} \mathcal{H}^{1} (\gamma(\tau) ) > \left( 2^pC_*  t^{2-p} \int_{E_t} |u(x)|^p \,dx \right)^{1/2}
\]
holds for all $\tau \in [t/2,t]$, 
by integration of both sides over $[t/2,t]$ with respect to $\tau$, we have
\[
	\frac{t}{2} \int_{t/2}^t \left( \int_{\gamma(\tau)} \, dS \right) d\tau 
	> \frac{t}{2} \left( 2^p C_*  t^{2-p} \int_{E_t} |u(x)|^p \,dx \right)^{1/2},
\]
which contradics the previous inequality. Therefore, we obtain \eqref{eq:est:lvset}.
Let
\[
	g(t) := \sup \{ |x| ; x \in \gamma(t) \}.
\]
Then,
we see that
$2g(t) \le \mathcal{H}^1(\gamma(\tau))$
with $\tau$ satisfying \eqref{eq:est:lvset}
by an elementary geometric argument.
Indeed, let $x_{\ast} \in \gamma(t)$ be a point satisfying $g(t) = |x_{\ast}| = \max\{ |x| ; x\in \gamma(t) \}$.
Since $\tau \in [t/2, t]$, we have $x_{\ast} \in \overline{\Int \gamma(\tau)}$.
Let $\ell$ be any line segment extending $[0,x_{\ast}]$ such that its endpoints lie on $\gamma(\tau)$
with denoting its length by $|\ell|$.
Then, it is obvious that $2|\ell|$ does not exceed $\mathcal{H}^1(\gamma(\tau))$,
and this immediately implies $2g(t) \le \mathcal{H}^1(\gamma(\tau))$.
It should be noticed that this argument uses that the space dimension is two.
\footnote{If the space dimension is three or higher, this geometric argument does not work well.}
\begin{center}
\begin{tikzpicture}

\filldraw (0,0) circle[radius=1pt];
\node at (0,0) [above] {$0$};

\draw[bend right,distance=0.7cm] (0,0) to node[fill=white,inner sep=0.2pt,circle] {$g(t)$} (2,1);
\draw (0,0) -- (2,1) .. controls +(120:0.5) and +(0: 0.3) .. (1, 1.5)
				.. controls +(180:0.7) and +(90:0.5) .. (-1,0)
				.. controls +(270:0.5) and +(180:0.3) .. (0.5, -0.5)
				.. controls +(0:0.5) and +(240:0.3) .. (2, 0)
				.. controls +(60:0.3) and +(300:0.3) .. (2,1);
\filldraw (2,1) circle[radius=1pt] node[above=4pt] {$x_{\ast}$};
\node at (1,-0.7) {$\gamma(t)$};

\draw[dotted] (2,1) -- (3,1.5);
\draw[dotted] (0,0) -- (-1.4,-0.7);

\draw (3,1.5) .. controls +(130: 0.5) and +(0:0.6) .. (1.5, 2.5)
			.. controls +(180:1 ) and +(90:0.8) .. (-1.8, 0.5)
			.. controls +(270:0.5) and +(120: 0.3) .. (-1.4, -0.7)
			.. controls +(300:0.3) and +(180:0.5) .. (0,-1.5)
			.. controls +(0:1.5) and +(225: 0.5) .. (3,-0.5)
			.. controls +(45:0.7) and +(310:0.7) .. (3,1.5);
\node at (2.7,1.2) {$\ell$};

\node at (3.8,0) {$\gamma(\tau)$};

\end{tikzpicture}

\end{center}
\par
Therefore, we conclude
\[
	t g(t) \le
	\frac{t}{2} \mathcal{H}^1(\gamma(\tau))
	\le
	\left( 2^p C_*  t^{2-p} \int_{E_t} |u(x)|^p \,dx \right)^{1/2},
\]
which implies
\[
	t g(t)^{\frac{2}{p}} \le C_{*}^{\prime} \left( \int_{E_t} |u(x)|^p \,dx \right)^{1/p}.
\]
Thus, for every $t\in \widetilde{\mathcal{I}}$ and $x \in \gamma(t)$, we have
\[
	u(x) \le C_{*}^{\prime} |x|^{-\frac{2}{p}} \left( \int_{E_t} |u(x)|^p \,dx \right)^{1/p}.
\]
This completes the proof of Lemma \ref{lem2}. 
\qed
\subsection{Proof of Theorem \ref{thm1}} 
The proof of Theorem \ref{thm1} is based on Lemma \ref{lem2}.
(i) Since $u \in L^{p, q}(\Omega)$ for $1 \le  p, q < \infty$, 
for every $\varepsilon >0 $ there is $t_\ep$ such that 
\begin{equation}\label{eqn:2.4}
\left( \int_0^{s} \tau^q \left( | \{ x \in \Omega ;\, |u(x)| > \tau \} | \right)^{q/p} \,\frac{d\tau}{\tau} \right)^{1/q} < \varepsilon
\quad 
\mbox{for all $0 < s \le t_\varepsilon $}.   
\end{equation} 
Furthermore, since $\lim_{|x|\to\infty}u(x) = 0$, 
there exists $R_\varepsilon > R$ such that 
\begin{equation}\label{eqn:2.5} 
u(x) < t_\varepsilon 
\quad
\mbox{for all $x\in \mathbb{R}^2$ with $|x| \ge R_\varepsilon$}.  
\end{equation}
Let $|x| \ge R_\varepsilon$.  
Defining $t := u(x)$,  we have by \eqref{eqn:2.5} that $t < t_\varepsilon$.  
We need to consider the following four cases
\begin{itemize}
\item[Case 1.] $x \in \gamma(t)$ for $t\in \widetilde{\mathcal{I}}$; 
\item[Case 2.] $x \in \gamma(t)$ for $t \in \mathcal{I}\setminus\widetilde{\mathcal{I}}$;
\item[Case 3.] $x \in u^{-1}(t)\setminus \gamma(t)$ for $t \in \mathcal{I}$; 
\item[Case 4.] $x \in u^{-1}(t)$ for $t \in \mathrm{Cr}(u)$. 
\end{itemize}
Here, we recall that
$\mathcal{I}$ and $\widetilde{\mathcal{I}}$ are defined by
\eqref{eq:I} and \eqref{eq:Itilde}, respectively.
\par
{\it Case 1.}  
Since we have by Proposition \ref{pr:2.2} that 
$E_t =  \bigcup_{s \in (\frac t2, t) \cap \mathcal{I}}\gamma(s) \cup G$ with $|G| =0$, 
 it follows from \eqref{eqn:2.4}, \eqref{eqn:2.5} and Lemma \ref{lem2} that  
\begin{align}\label{eqn:2.6}
u(x) &\le C|x|^{-\frac 2p}\left(\int_{E_t}|u(y)|^pdy\right)^{\frac1p}\nonumber  \\
&= C|x|^{-\frac 2p}\left(\int_{\bigcup_{s \in (\frac t2, t) \cap \mathcal{I}}\gamma(s)}|u(y)|^pdy\right)^{\frac1p} \nonumber \\
&\le  C|x|^{-\frac 2p} t|\{y \in B_R^c; |u(y)|>t/2\}|^{\frac1p} \nonumber \\
&\le C|x|^{-\frac 2p}\left(\int_0^{t/2}\tau^q\frac {d\tau}{\tau}\right)^{\frac1q}|\{y \in B_R^c; |u(y)|>t/2\}|^{\frac1p}
 \nonumber \\
&\le C|x|^{-\frac 2p}
\left(\int_0^{t/2}\tau^q |\{y \in B_R^c; |u(y)|>\tau\}|^{\frac qp} 
\frac {d\tau}{\tau}\right)^{\frac1q} \nonumber \\
&\le C\varepsilon |x|^{-\frac 2p}. 
\end{align} 
\par
{\it Case 2.} Since $\nabla u(x) \ne \mathbf{0}$, there are two sequences 
$\{t_j\}_{j=1}^\infty$ and $\{x_j\}_{j=1}^\infty$ 
such that 
\begin{align}
& t_j \in \widetilde{\mathcal{I}}, 
\quad j=1, \dots
\quad
\mbox{with $\lim_{j\to\infty}t_j = t$}, \label{eqn:2.7} \\ 
& 
x_j \in \gamma(t_j) \quad j=1, \dots
\quad
\mbox{with $\lim_{j\to\infty}x_j = x$}. \label{eqn:2.8}
\end{align}
For validity of \eqref{eqn:2.8}, we may show that if a connected component 
of $u^{-1}(t_j)$ is sufficiently close to $\gamma(t)$, then it coincides with 
$\gamma(t_j)$.  Indeed, since $t \in \mathcal{I}\setminus\widetilde{\mathcal{I}}$,  
for every $y \in \gamma(t)$ we have $\nabla u(y) \ne \mathbf{0}$, and hence 
there exists a neighborhood $U_y$ of $y$ such that every level set of $u$ in 
$U_y$ consists of a smooth curve.  Since $\gamma(t)$ is compact, there are 
finitely many points $y_1, \dots, y_N$ of $\gamma(t)$ such that $\gamma(t) \subset \bigcup_{k=1}^NU_{y_k}$.   
Since the connected component of $u^{-1}(s)$ for $s \in \widetilde{\mathcal{I}}$ 
contained in $\bigcup_{k=1}^NU_{y_k}$ is a closed curve 
which is homotopic to $\gamma(t)$ and which contains $B_R$ in its inside, 
it necessarily coincides with $\gamma(s)$.  
Hence, we may choose $x_j \in \gamma(t_j)$ in such a way that the condition \eqref{eqn:2.8} is fulfilled.  
\par
Applying Lemma \ref{lem2} to $\{t_j\}_{j=1}^\infty$ and $\{x_j\}_{j=1}^\infty$, we have 
that 
$$
u(x_j) \le C|x_j|^{-\frac 2p}\left(\int_{E_{t_j}}|u(y)|^pdy\right)^{\frac 1p}, 
\quad
j=1, 2, \dots.  
$$ 
Letting $j\to \infty$ in both sides of the above estimate, similarly to \eqref{eqn:2.6}, 
we have by \eqref{eqn:2.4} that 
\begin{align}
u(x) &\le C|x|^{-\frac 2p}\limsup_{j\to\infty}\left(\int_{E_{t_j}}|u(y)|^pdy\right)^{\frac 1p} 
\nonumber \\
&\le  C|x|^{-\frac 2p}\limsup_{j\to\infty}
\left(\int_0^{t_j/2}\tau^q|\{y \in B_R^c; |u(y)|>\tau\}|^{\frac qp}\frac {d\tau}{\tau}\right)^{\frac1q}\nonumber \\
&\le C\varepsilon|x|^{-\frac 2p}.  \label{eqn:2.9}
\end{align}
\par
{\it Case 3.} 
In this case, we see that $x$ is contained in some connected component of $u^{-1}(t)$ which 
is different from $\gamma(t)$.  
Taking the half line $l_x$ connecting the origin in $\mathbb{R}^2$ and $x$, 
we take $x_\ast \in \gamma(t)$ on $l_x$ in such a way that the distance from the origin to 
$x_\ast$ the longest among intersection points of $\gamma(t)$ and $l_x$.   
Since $x_\ast \in \gamma(t)$ with $|x| \le |x_\ast|$ and $t \in \mathcal{I}$, 
we may apply the estimate \eqref{eqn:2.9} with $x$ replaced by $x_\ast$ to obtain that 
\begin{align}
u(x) = t = u(x_\ast) &\le 
C|x_\ast|^{-\frac 2p}\limsup_{j\to\infty}\left(\int_{E_{t_j}}|u(x)|^pdx\right)^{\frac 1p} \nonumber  \\
&\le   C\ep|x|^{-\frac 2p},  \label{eqn:2.10}
\end{align}
where $\{t_j\}_{j=1}^\infty$ may be chosen like \eqref{eqn:2.7} with $x$ replaced by $x_\ast$ 
in \eqref{eqn:2.8}.  
It should be noted that if $x \in u^{-1}(t)\setminus \gamma(t)$ with $t \in \widetilde{\mathcal{I}}$, 
then we have by taking $x_\ast\in \gamma(t)$ as in Case 1 that 
\begin{align}  
u(x) = t = u(x_\ast) &\le C|x_\ast|^{-\frac 2p}\left(\int_{E_t}|u(y)|^pdy\right)^{\frac 1p} 
\nonumber \\
&\le  C|x|^{-\frac 2p}\left(\int_{E_t}|u(y)|^pdy\right)^{\frac 1p}.  \label{eqn:2.11}
\end{align} 
\par
\bigskip
\begin{center}
\begin{tikzpicture}

\draw[fill = lightgray] (0.4,0) .. controls +(90:0.7) and +(0:0.1) .. (0,0.6)
			.. controls +(180:0.1) and +(90:0.4) .. (-0.4,0)
			.. controls +(270:0.4) and +(180:0.1) .. (0,-0.6)
			.. controls +(0:0.1) and +(270:0.7) .. (0.4,0);
\node at (-0.1,0.8) {$\Omega^c$};

\filldraw (0,0) circle[radius=1pt];
\node at (0,0) [left] {$0$};

\draw (3, 0) .. controls +(90:0.7) and +(330:0.5) .. (2,1.5)
			.. controls +(150:0.3) and +(0:0.8) .. (0,2)
			.. controls +(180:0.8) and +(90:0.5) .. (-2,0)
			.. controls +(270:0.8) and +(150:0.3) .. (-1,-1)
			.. controls +(330:2) and + (270:0.5) ..(3,0);
\node at (2.7,-0.8) {$\gamma(t)$};

\filldraw (1.5,0) circle[radius=1pt] node[below] {$x$};

\draw (1.5, 0) .. controls +(0:0.2) and +(270:0.1) .. (2,0.4)
			.. controls +(90:0.3) and +(0:0.2) .. (1.3, 0.8)
			.. controls +(180:0.2) and +(90:0.3) .. (1,0.4)
			.. controls +(270:0.3) and +(180:0.3) .. (1.5,0);
\node at (1.2,1.1) {$u^{-1}(t)$};

\draw[dotted, thick] (0,0)--(3,0);
\filldraw (3,0) circle[radius=1pt] node[right] {$x_{\ast}$};

\end{tikzpicture}

\end{center}
\par
{\it Case 4. }
Since $u^{-1}(t)$ is compact in $\mathbb{R}^2$, there exists $x^\ast \in u^{-1}(t)$ such that 
$|x^\ast|= \max\{|y|;\,\, y\in u^{-1}(t)\}$.  Let $\nu \equiv x^\ast/|x^\ast|$.  
It holds that $\lim_{\theta \to 0}u(x^\ast + \theta\nu) = u(x^\ast) = t$.  
By the definition of $x^\ast$, we have $t_\nu \equiv u(x^\ast + \nu) \neq t$.
Let us assume $t_{\nu} < t$ for simplicity.
The other case may be handled similarly.
Hence, there exist two sequences $\{t_j\}_{j=1}^\infty$ and $\{x_j\}_{j=1}^\infty$   
such that        
\begin{align}
	& t_j \in \widetilde{\mathcal{I}} \cap (t_{\nu}, t), 
	\quad j=1, \dots
	\quad
	\mbox{with $\lim_{j\to\infty}t_j = t$}, \label{eqn:2.12} \\ 
	& 
	x_j \in u^{-1}(t_j) \quad j=1, \dots
	\quad
	\mbox{with $\lim_{j\to\infty}x_j = x^\ast$}. \label{eqn:2.13}
\end{align}
Indeed,
we first note that the intermediate value theorem applied to the function
$[0,1] \ni \theta \mapsto u(x^{\ast}+\theta \nu)$
implies
$[x^\ast, x^\ast + \nu] \cap u^{-1}(t_j) \neq \emptyset$ for each $t_j \in \widetilde{\mathcal{I}} \cap \,(t_{\nu}, t).$
Then, we may choose $x_j$ the nearest point
to $x^{\ast}$
on 
$[x^\ast, x^\ast + \nu] \cap u^{-1}(t_j)$.   
For each $x_j$ we may apply \eqref{eqn:2.11} to obtain 
$$
u(x_j) \le C|x_j|^{-\frac 2p}\left(\int_{E_{t_j}}|u(y)|^pdy\right)^{\frac1p}.
$$  
Letting $j \to \infty$ in the above, we have by \eqref{eqn:2.4} that 
\begin{align}
u(x) = t = u(x^\ast)
&\le 
C|x^\ast|^{-\frac2p}\limsup_{j\to \infty}\left(\int_{E_{t_j}}|u(y)|^pdy\right)^{\frac1p} \nonumber \\
&\le 
 C\varepsilon|x|^{-\frac2p}.   \label{eqn:2.13}
\end{align}
Since $\varepsilon >0$ is arbitrary,  it follows from \eqref{eqn:2.6}, \eqref{eqn:2.9}, 
\eqref{eqn:2.10} and \eqref{eqn:2.13} that 
$$
u(x) = o(|x|^{-\frac 2p}) 
\quad
\mbox{as $|x| \to  \infty$}.  
$$
\par
(ii) Since $u \in L^{p, \infty}(\Omega)$,  instead of \eqref{eqn:2.4} it holds that 
$$
\|u\|_{L^{p, \infty}}=\sup_{s>0}s|\{y\in B_R^c; |u(y)|>s\}|^{\frac1p} < \infty.  
$$
Hence, in Case 1, we have similarly to \eqref{eqn:2.6} that 
\begin{equation}\label{eqn:2.14}
u(x) \le C|x|^{-\frac 2p}t|\{y \in B_R^c; |u(y)|>t/2\}|^{\frac1p} \le C|x|^{-\frac 2p}\|u\|_{L^{p, \infty}}.   
\end{equation} 
Similarly to \eqref{eqn:2.9}, \eqref{eqn:2.10} and \eqref{eqn:2.13}, we obtain the 
same estimate as \eqref{eqn:2.14} in other Cases 2, 3 and 4.  
This completes the proof of Theorem \ref{thm1}.   

\section*{Acknowledgement}
This work was supported by JSPS 	
Grant-in-Aid for Scientific Research (A) JP21H04433, 
Research (A) JP22H00097, Research (C) JP24K06811
and Grant-in-Aid for Transformative Research Areas (B) 25H01453.  
The authors are grateful to the anonymous referees for their helpful comments and suggestions.

\par
\bigskip
{\bf Declaration of interests} The authors declare no conflicts of interest.
\par
{\bf Data availability statement} 
Data sharing is not applicable to this article as no data were created or analyzed in this study.

\end{document}